\newtheorem{thm}{Theorem}[section]
\newtheorem{lem}{Lemma}[section]
\newtheorem{prob}{Problem}[section]
\newtheorem{conj}{Conjecture}[section]
\theoremstyle{definition}
\theoremstyle{remark}
\numberwithin{equation}{section}
\numberwithin{table}{section}
\journal{Linear Algebra and Its Applications}
\begin{document}

\begin{frontmatter}

%% Title, authors and addresses

%% use the tnoteref command within \title for footnotes;
%% use the tnotetext command for theassociated footnote;
%% use the fnref command within \author or \address for footnotes;
%% use the fntext command for theassociated footnote;
%% use the corref command within \author for corresponding author footnotes;
%% use the cortext command for theassociated footnote;
%% use the ead command for the email address,
%% and the form \ead[url] for the home page:
%% \title{Title\tnoteref{label1}}
%% \tnotetext[label1]{}
%% \author{Name\corref{cor1}\fnref{label2}}
%% \ead{email address}
%% \ead[url]{home page}
%% \fntext[label2]{}
%% \cortext[cor1]{}
%% \address{Address\fnref{label3}}
%% \fntext[label3]{}

\title{Some results and a conjecture on certain subclasses of graphs according to the relations among certain energies, degrees and conjugate degrees of graphs} 

%% use optional labels to link authors explicitly to addresses:
%% \author[label1,label2]{}
%% \address[label1]{}
%% \address[label2]{}

\author[gazi]{Ercan Alt\i n\i \c{s}\i k\corref{corresponding}}\ead{ealtinisik@gazi.edu.tr}
\author[gazi]{Nur\c{s}ah Mutlu Varl\i o\~{g}lu}\ead{nmutlu@gazi.edu.tr}

\cortext[corresponding]{Corresponding author. Tel.: +90 312 202 1070}
\address[gazi]{Department of Mathematics, Faculty of Sciences, Gazi University \\ 06500 Teknikokullar - Ankara, Turkey}

\begin{abstract}
%% Text of abstract
Let $G$ be a simple graph of order $n$ with degree sequence $(d)=(d_1,d_2,\ldots,d_n)$ and conjugate degree sequence $(d^*)=(d_1^*,d_2^*,\ldots,d_n^*)$. In \cite{AkbariGhorbaniKoolenObudi2010,DasMojallalGutman2017} it was proven that $\mathcal{E}(G)\leq \sum_{i=1}^{n} \sqrt{d_i}$ and $\sum_{i=1}^{n} \sqrt{d_i^*} \leq LEL(G) \leq IE(G) \leq \sum_{i=1}^{n} \sqrt{d_i}$, where $\mathcal{E}(G)$, $LEL(G)$ and $IE(G)$ are the energy, the Laplacian-energy-like invariant and the incidence energy of $G$, respectively, and in \cite{DasMojallalGutman2017} it was concluded that the class of all connected simple graphs of order $n$ can be dividend into four subclasses according to the position of $\mathcal{E}(G)$ in the order relations above. Then, they proposed a problem about characterizing all graphs in each subclass. In this paper, we attack this problem. First, we count the number of graphs of order $n$ in each of four subclasses for every $1\leq n \leq 8$ using a Sage code. Second, we present a conjecture on the ratio of the number of graphs in each subclass to the number of all graphs of order $n$ as $n$ approaches the infinity. Finally, as a first partial solution to the problem, we determine subclasses to which a path, a complete graph and a cycle graph of order $n\geq 1$ belong.   
\end{abstract}

\begin{keyword}
energy \sep Laplacian energy \sep Laplacian-energy-like \sep incidence energy \sep degree sequence \sep conjugate degree sequence.
%% keywords here, in the form: keyword \sep keyword

%% PACS codes here, in the form: \PACS code \sep code

%% MSC codes here, in the form: \MSC code \sep code
%% or \MSC[2008] code \sep code (2000 is the default)
\MSC 05C50 \sep 05C07

\end{keyword}

\end{frontmatter}

%% \linenumbers

%% main text
\section{Introduction} \label{}
Let $G$ be a simple graph with vertex set $V(G)=\{v_1,v_2,\ldots , v_n\}$ and edge set $E(G)=\{e_1,e_2,\ldots , e_m\}$. Let $d_i$ be the degree of vertex $v_i$ for $i=1,2,\ldots , n$ such that $d_1 \geq d_2 \geq \cdots \geq d_n.$ The degree sequence of $G$ is the finite sequence $(d)=(d_1,d_2,\ldots , d_n)$ and the conjugate degree sequence of $G$ is the sequence $(d^*)=(d^*_1,d^*_2,\ldots , d^*_n)$, where $d^*=|\{j:d_j\geq i\}|$, see \cite{Molitierno2012}. The adjacency matrix $A(G)$ of $G$ is the matrix whose $ij$-entry is 1 if $v_i$ and $v_j$ are adjacent and 0 otherwise. Its eigenvalues are denoted by $\lambda_1, \lambda_2, \ldots, \lambda_n.$ The energy of the graph $G$ is defined as 
$$
\mathcal{E}(G)= \sum_{i=1}^{n} |\lambda_i|.
$$ 
This concept was defined by I. Gutman \cite{Gutman1978} in 1978. Since then there has been a great interest on this concept and its relatives, see the papers \cite{Gutman2001,Brualdi2006,Nikiforov2016,DasMojallalGutman2017} and the references cited therein. 

The Laplacian of $G$ is $L(G)=D(G)-A(G)$, where $D(G)$ is the diagonal matrix of $d_1, d_2, \ldots , d_n$. It is clear that the Laplacian matrix has nonnegative eigenvalues $\mu_1, \mu_2, \ldots, \mu_n$ such that $\mu_1 \geq \mu_2 \geq \cdots \geq \mu_n=0$. The Laplacian energy $LE(G)$ of $G$ is
$$
LE(G)=\sum_{i=1}^{n} \left| \mu_i-\frac{2m}{n}\right|.
$$  
$LE(G)$ is defined by I. Gutman and B. Zhou \cite{GutmanZhou2006} in 2006. The recent results on this concept can be found in \cite{DasMojallal2015,DasMojallalGutman2016}. 

The Laplacian-energy-like invariant $LEL(G)$ of $G$ is defined as 
$$
LEL(G)=\sum_{i=1}^{n} \sqrt{\mu_i}.
$$
It was defined by Liu and Liu \cite{LiuLiu2008} in 2008. For details on $LEL$ see \cite{DasMojallalGutman2017} and the references cited therein. 

The incidence energy $IE(G)$ of $G$ is the sum of all singular values of the incidence matrix of $G$. One can consult the text \cite{Molitierno2012} for the definition of the incidence matrix of a graph. This type of graph energy was introduced by M. Jooyandeh et al. \cite{JooyandehKianiMirzakhah2009} in 2009. Then, Gutman and et al. \cite{GutmanKianiMirzakhah2009} showed that if $q_1,q_2,\ldots,q_n$ are the eigenvalues of the signless Laplacian matrix $Q(G)$ of $G$, i.e., $ Q(G)=D(G)+A(G)$, then 
$$
IE(G)=\sum_{i=1}^{n} \sqrt{q_i}.
$$
For the recent results, see \cite{DasGutman2014} and the references therein. 

There has been a great interest recently on these graph energies in the literature. In particular, many results which compare certain types of graph energies and graph invariants were published. Indeed, Akbari et al. \cite{AkbariGhorbaniKoolenObudi2010} proved that 
$$
LEL(G) \leq IE(G).
$$ 
Motivated by this result, Das et al. \cite{DasMojallalGutman2017} showed that 
$$
\pi^*(G) \leq LEL(G),\ IE(G) \leq \pi(G) \text{ and }\mathcal{E}(G) \leq \pi(G),
$$
and if $G$ is a threshold graph, then $\pi^*(G)=LEL(G)$, where $\pi^*(G)=\sum_{i=1}^{n} \sqrt{d_i^*}$ and $\pi(G)=\sum_{i=1}^{n} \sqrt{d_i}$. In the same paper, Das et al. \cite{DasMojallalGutman2017} presented the following open problem. 

\begin{prob} \label{problem}[Problem 5.4 \cite{DasMojallalGutman2017}]
\begin{enumerate}
	\item[(1)] Characterize all (connected) graphs for which $\mathcal{E}(G) \leq \pi^*(G)$
	\item[(2)] Characterize all (connected) graphs for which $\pi^*(G) < \mathcal{E}(G) \leq LEL(G)$
	\item[(3)] Characterize all (connected) graphs for which $LEL(G) < \mathcal{E}(G) \leq IE(G)$
	\item[(4)] Characterize all (connected) graphs for which $IE(G) < \mathcal{E}(G) \leq \pi(G)$.
\end{enumerate}
\end{prob}

In the paper of Das et al. \cite{DasMojallalGutman2017} each of left-hand inequalities is originally "less than or equal to." Since $\mathcal{E}(G)$ might be equal to one of these graph invariants or graph energies for some graphs, we have to replace "$\leq$" with "$<$" in left-hand inequalities in (2)-(4). For example, for each odd integer $n\geq3$, $\mathcal{E}(C_n)= IE(C_n)$, where $C_n$ is the cycle graph with $n$ vertices.

For simplicity, we can propose new notations for certain subclasses of graphs investigated in Problem~\ref{problem}. Let $\mathcal{G}_n$ denote the class of all connected simple graphs of order $n$. We also denote the classes of the graphs satisfying the conditions given in (1), (2), (3) and (4) by $\mathcal{G}_n^1$, $\mathcal{G}_n^2$, $\mathcal{G}_n^3$ and $\mathcal{G}_n^4$, respectively. Then we can restate Problem~\ref{problem} as \textit{"Characterize all graphs in each of $\mathcal{G}_n^1$, $\mathcal{G}_n^2$, $\mathcal{G}_n^3$ and $\mathcal{G}_n^4$."}

In this paper, we attack Problem~\ref{problem}. First, using a Sage code \cite{sage}, we find the number of graphs in each class of $\mathcal{G}_n^1$, $\mathcal{G}_n^2$, $\mathcal{G}_n^3$ and $\mathcal{G}_n^4$, separately. Second, in the light of the ratios of the numbers of graphs in these subclasses to the number of graphs in $\mathcal{G}_n $ for each $1\leq n \leq 8$, we present a conjecture about the density of graphs of these subclasses in $\mathcal{G}_n $ as $n$ approaches the infinity. Finally, as a first step in solving Problem~\ref{problem}, we show that a path is in $\mathcal{G}_n^4$, a complete graph is in $\mathcal{G}_n^1$ and a cycle graph is in one of the classes $\mathcal{G}_n^2$, $\mathcal{G}_n^3$ and $\mathcal{G}_n^4$ according to $n=4k$, $n=2k+1$ and $n=4k+2$, respectively, for a positive integer $k$.  

\section{The Number of Graphs in Each Subclasses and A Conjecture on Average Numbers of Graphs}
Sage \cite{sage} has a graph library which includes all connected simple graphs of order $n\leq 8$. Using a Sage code based on the graph library, we count the number of graphs in all subclasses $\mathcal{G}_n^1$, $\mathcal{G}_n^2$, $\mathcal{G}_n^3$ and $\mathcal{G}_n^4$, see Table~2.1. 
\begin{table}[h!]
	\begin{center}
			\label{tab:table1}
		\begin{tabular}{|c|ccccrrrr|}
			\hline
			$n$  & \text{  1  } & \text{  2  } &\text{  3  }& \text{  4  } & \text{  5  } & \text{  6  } & \text{  7  } & \text{ 8 } \\
			\hline
			$|\mathcal{G}_n|$    & 1 & 1  &	2 & 6 & 21 & 112 & 853 & 11117 \\
			$|\mathcal{G}_n^1|$  & 1 & 0  &	0 & 4 & 12 & 58  & 440 & 5586  \\
			$|\mathcal{G}_n^2|$  & 0 & 0  &	0 & 0 & 4 & 39   & 381 & 5463  \\
			$|\mathcal{G}_n^3|$  & 0 & 0  &	1 & 1 & 4 & 12 & 28 & 59 \\
			$|\mathcal{G}_n^4|$  & 0 & 1  &	1 & 1 & 1 & 3  & 4 & 9 \\
			\hline
			% <-- added row here
		\end{tabular}
		\caption{The number of graphs in classes $\mathcal{G}_n, \mathcal{G}_n^1, \mathcal{G}_n^2, \mathcal{G}_n^3$ and $\mathcal{G}_n^4$. }
	\end{center}
\end{table}

Then, it is natural to consider some certain ratios of the numbers of graphs in these four subclasses to the number of graphs in $\mathcal{G}_n$, see Table~2.2. 

\begin{table}[h!]
	\begin{center}
		\label{tab:table2}
		\small
		\begin{tabular}{|c|ccccrrrr|}
			\hline
			$n$  & \text{  1  } & \text{  2  } &\text{  3  }& \text{  4  } & \text{  5  } & \text{  6  } & \text{  7  } & \text{ 8 } \\
			\hline 
			$\frac{|\mathcal{G}_n^1|}{|\mathcal{G}_n|}$  & 1.00000 & 0.00000 & 0.00000 & 0.66667 & 0.57143 & 0.51786  & 0.51583 & 0.50247 \\ 
			$\frac{|\mathcal{G}_n^2|}{|\mathcal{G}_n|}$  & 0.00000 & 0.00000 &	0.00000 & 0.00000 & 0.19048
			 & 034821 & 0.44666  & 0,49141  \\
			$\frac{|\mathcal{G}_n^3|}{|\mathcal{G}_n|}$  & 0.00000 & 0.00000 &	0.50000 & 0.16667 & 0.19048 & 0.10714 & 0.03283 & 0.00531 \\
			$\frac{|\mathcal{G}_n^4|}{|\mathcal{G}_n|}$  & 0.00000 & 1.00000 &	0.50000 & 0.16667 & 0.04762 & 0.02679  & 0.00469 & 0.00081 \\
			\hline
			% <-- added row here
		\end{tabular}
		\caption{Ratios of the numbers of graphs in subclasses to the number of graphs in $\mathcal{G}_n.$}
	\end{center}
\end{table}

In the light of Table~2.2, we can present the following conjecture. 
\begin{conj} \label{conjecture}
Let $n\geq 1$. Then, we have
$$
\lim\limits_{n \rightarrow \infty} \frac{|\mathcal{G}_n^1|}{|\mathcal{G}_n|} = \lim\limits_{n \rightarrow \infty} \frac{|\mathcal{G}_n^2|}{|\mathcal{G}_n|} = \frac{1}{2}
\text{  and  } 
\lim\limits_{n \rightarrow \infty} \frac{|\mathcal{G}_n^3|}{|\mathcal{G}_n|} = \lim\limits_{n \rightarrow \infty} \frac{|\mathcal{G}_n^4|}{|\mathcal{G}_n|} = 0.
$$ 
\end{conj}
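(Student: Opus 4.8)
The plan is to translate the counting problem into a probabilistic one and then to analyze the five functionals $\pi^{*},\mathcal{E},LEL,IE,\pi$ by spectral methods. Because almost every graph on $n$ labelled vertices is connected and has trivial automorphism group, the uniform distribution on the (unlabelled, connected) class $\mathcal{G}_n$ agrees to within a factor $1+o(1)$ with the distribution of the Erd\H{o}s--R\'enyi graph $G=G(n,1/2)$, in which each edge appears independently with probability $1/2$. Under this identification each ratio $|\mathcal{G}_n^{k}|/|\mathcal{G}_n|$ becomes $\mathbb{P}(G\in\mathcal{G}_n^{k})\,(1+o(1))$, so the conjecture reduces to the two assertions $\mathbb{P}(\mathcal{E}(G)\le\pi^{*}(G))\to\tfrac12$ and $\mathbb{P}(\mathcal{E}(G)>LEL(G))\to 0$.

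The first step is to pin down the almost-sure leading order of each functional. The degrees concentrate, $d_i=\tfrac n2+O(\sqrt{n\log n})$ for all $i$ with high probability, so $\pi(G)=(1+o(1))\tfrac{1}{\sqrt2}n^{3/2}$; and since the conjugate sequence satisfies $d_i^{*}=n$ for $i\lesssim n/2$ and $d_i^{*}=0$ for $i\gtrsim n/2$, with a transition window of width $O(\sqrt{n\log n})$ contributing only $o(n^{3/2})$, one gets $\pi^{*}(G)=(1+o(1))\tfrac12 n^{3/2}$. For the spectral quantities, write $A=\tfrac12(J-I)+W$ with $W$ a zero-mean Wigner matrix of entry-variance $\tfrac14$: the Perron eigenvalue is $\approx n/2$ and the remaining spectrum follows the semicircle law on $[-\sqrt n,\sqrt n]$, whence $\mathcal{E}(G)=\sum_i|\lambda_i|=(1+o(1))\tfrac{4}{3\pi}n^{3/2}$ (the known almost-sure energy asymptotics for dense random graphs). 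Since both $Q=D+A$ and $L=D-A$ have bulk eigenvalues clustered near $n/2$, one obtains $IE(G),\,LEL(G)=(1+o(1))\tfrac{1}{\sqrt2}n^{3/2}$, so that $LEL$, $IE$ and $\pi$ share the same leading constant $\tfrac1{\sqrt2}$.

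With these asymptotics the vanishing of the last two densities should follow cleanly. As $\tfrac{4}{3\pi}<\tfrac{1}{\sqrt2}$, the energy lies strictly below $LEL$ at leading order, and concentration of $\mathcal{E}(G)$ (a linear eigenvalue statistic after regularizing $|x|$ near the origin) together with degree concentration and eigenvalue rigidity for $LEL(G)$ gives $\mathbb{P}(\mathcal{E}(G)>LEL(G))\to 0$. Since by definition $\mathcal{G}_n^{3}\cup\mathcal{G}_n^{4}\subseteq\{\mathcal{E}(G)>LEL(G)\}$, this yields $|\mathcal{G}_n^{3}|/|\mathcal{G}_n|\to0$ and $|\mathcal{G}_n^{4}|/|\mathcal{G}_n|\to0$, establishing the second half of the conjecture.

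The heart of the matter, and the step I expect to be the \emph{main obstacle}, is the split between $\mathcal{G}_n^{1}$ and $\mathcal{G}_n^{2}$, i.e.\ proving $\mathbb{P}(\mathcal{E}(G)\le\pi^{*}(G))\to\tfrac12$. This forces the centered difference $\mathcal{E}(G)-\pi^{*}(G)$ to straddle $0$ with asymptotically equal probability, which can happen only if $\mathcal{E}$ and $\pi^{*}$ agree to leading order and the sign of their difference is then decided by a \emph{symmetric} fluctuation. The approach would be to develop the joint fluctuation theory of the two functionals -- a central limit theorem for the linear statistic $\mathcal{E}(G)$ together with the Gaussian fluctuations of $\pi^{*}(G)$ inherited from the degree sequence -- and to show that a suitably normalized $\mathcal{E}(G)-\pi^{*}(G)$ has a limiting law symmetric about $0$. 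The genuine difficulty is that the crude semicircle computation produces the unequal constants $\tfrac{4}{3\pi}$ and $\tfrac12$; the decisive task is therefore to determine whether these leading constants in fact coincide once the contributions near the spectral origin and near the degree threshold are treated exactly, or whether the comparison must be pushed entirely to the level of the fluctuations. Settling this fine, second-order comparison, and proving the symmetry of the resulting limit, is the step on which the whole conjecture turns.
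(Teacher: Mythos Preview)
The statement you are addressing is a \emph{conjecture}: the paper offers no proof, only the numerical evidence of Tables~2.1--2.2 for $n\le 8$. So there is no ``paper's own proof'' to compare against; your proposal is an attempt to settle an open problem.

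Your strategy for the second pair of limits is reasonable at the heuristic level. The identification of the uniform measure on $\mathcal{G}_n$ with $G(n,1/2)$ up to $1+o(1)$ factors, the degree concentration giving $LEL(G),IE(G),\pi(G)\sim \tfrac{1}{\sqrt2}n^{3/2}$, and the semicircle asymptotics $\mathcal{E}(G)\sim \tfrac{4}{3\pi}n^{3/2}$ are all standard, and since $\tfrac{4}{3\pi}<\tfrac{1}{\sqrt2}$ this does drive $\mathbb{P}(\mathcal{E}(G)>LEL(G))\to 0$. Turning this into a rigorous proof still requires care (concentration for $\mathcal{E}$, which is a non-smooth linear statistic; uniform control of $LEL$), but the outline is sound.

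The genuine gap is in the first pair of limits, and it is not the ``obstacle'' you describe---it is that your own computation \emph{contradicts} the conjecture. You correctly obtain $\pi^{*}(G)\sim\tfrac12 n^{3/2}$ and $\mathcal{E}(G)\sim\tfrac{4}{3\pi}n^{3/2}$. These constants are not equal: $\tfrac{4}{3\pi}\approx 0.4244<\tfrac12$. Both asymptotics are firmly established (the energy constant is Nikiforov's theorem; the $\pi^{*}$ constant follows from the sharp degree concentration you invoke), and neither ``contributions near the spectral origin'' nor ``near the degree threshold'' can alter leading-order constants---they are lower-order corrections by definition. Hence your framework predicts $\mathcal{E}(G)<\pi^{*}(G)$ with probability $1-o(1)$, i.e.\ $|\mathcal{G}_n^{1}|/|\mathcal{G}_n|\to 1$ and $|\mathcal{G}_n^{2}|/|\mathcal{G}_n|\to 0$, not $\tfrac12$ each. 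There is no symmetric fluctuation argument to be made when the means are separated at leading order.

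In short: the second half of your plan is a credible route to the second half of the conjecture, but the first half of your plan, taken at face value, is evidence \emph{against} the first half of the conjecture rather than a proof of it. You should either explain why the well-known constants $\tfrac{4}{3\pi}$ and $\tfrac12$ are wrong (they are not), or acknowledge that the approach suggests the $\tfrac12$--$\tfrac12$ split is an artefact of small $n$.
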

Now, we call the limits in Conjecture~\ref{conjecture} \textit{the average numbers of graphs} in four subclasses. Hence, we can restate the conjecture as \textit{"The average numbers of graphs in $\mathcal{G}_n^1$ and $\mathcal{G}_n^2$ is $\frac{1}{2}$, while the average numbers for $\mathcal{G}_n^3$ and $\mathcal{G}_n^4$ is $0$."}   
\section{Some Particular Graphs in The Subclasses}

In this section, we take the first step towards solving Problem~\ref{problem}. To perform this, for all $n\geq 1$, we determine the subclasses to which a path graph, a complete graph and a cycle graph of order $n$ belong. To calculate the energies and related graph invariants of these graphs, we need the following lemma for the sums of values of the sine and the cosine functions at a certain arithmetic progression. For the proof of Lemma~\ref{lemma}, one can consult the book \cite{HermanKuceraSimsa}, see pages 77-78.     
\begin{lem} \cite{HermanKuceraSimsa} \label{lemma}
	\begin{eqnarray*}
		% \nonumber to remove numbering (before each equation)
		\sum_{j=0}^n \cos(\theta + \alpha j)= \frac{\sin ( \frac{(n+1) \alpha}{2})  \cos\left(\theta+ \frac{n \alpha}{2}\right) }{\sin\left( \frac{\alpha}{2}\right) } \\
	\end{eqnarray*}
	\begin{eqnarray*}
		% \nonumber to remove numbering (before each equation)
		\sum_{j=0}^n \sin(\theta + \alpha j)= \frac{\sin ( \frac{(n+1) \alpha}{2})  \sin\left(\theta+ \frac{n \alpha}{2}\right) }{\sin\left( \frac{\alpha}{2}\right) } \\
	\end{eqnarray*}
\end{lem}

\begin{thm} 
	Let $P_n$ be a path with $n\geq 2$ vertices. Then, $P_n \in \mathcal{G}_n^4$, that is $IE(P_n) < \mathcal{E}(P_n)  \leq \pi(P_n)$. 
\end{thm}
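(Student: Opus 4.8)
The plan is to work directly with the explicit spectra of $P_n$ and then to invoke Lemma~\ref{lemma} to turn both $IE(P_n)$ and $\mathcal{E}(P_n)$ into closed-form expressions in $n$. Recall the three ingredients. First, the adjacency eigenvalues of $P_n$ are $2\cos\frac{k\pi}{n+1}$, $k=1,\dots,n$, hence $\mathcal{E}(P_n)=2\sum_{k=1}^{n}\bigl|\cos\frac{k\pi}{n+1}\bigr|$. Second, $P_n$ is bipartite, so its signless Laplacian is cospectral with its Laplacian, whose eigenvalues are $2-2\cos\frac{k\pi}{n}=4\sin^{2}\frac{k\pi}{2n}$, $k=0,\dots,n-1$; therefore $IE(P_n)=\sum_{k=0}^{n-1}2\sin\frac{k\pi}{2n}$, all summands being nonnegative. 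Third, the degree sequence of $P_n$ (for $n\ge 2$) is $(2,\dots,2,1,1)$, so $\pi(P_n)=(n-2)\sqrt2+2$. The right-hand inequality $\mathcal{E}(P_n)\le\pi(P_n)$ is already furnished by the bound $\mathcal{E}(G)\le\pi(G)$ of Das et al. recalled in the Introduction, so the real task is the strict inequality $IE(P_n)<\mathcal{E}(P_n)$.

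Next I would evaluate the two trigonometric sums. For the incidence energy this is direct: applying the sine identity of Lemma~\ref{lemma} with $\theta=0$ and $\alpha=\frac{\pi}{2n}$ and simplifying with elementary identities yields $IE(P_n)=\cot\frac{\pi}{4n}-1$. For the energy one must first dispose of the absolute values: using the reflection $\bigl|\cos\frac{(n+1-k)\pi}{n+1}\bigr|=\bigl|\cos\frac{k\pi}{n+1}\bigr|$ together with the sign pattern $\cos\frac{k\pi}{n+1}\ge 0\iff k\le\frac{n+1}{2}$, the sum collapses to twice a sum of positive cosines over an initial segment of indices; a second application of Lemma~\ref{lemma} (the cosine identity), carried out separately in the cases $n+1$ even ($n$ odd) and $n+1$ odd ($n$ even), then gives
$$
\mathcal{E}(P_n)=2\cot\frac{\pi}{2(n+1)}-2 \ \ (n \text{ odd}), \qquad \mathcal{E}(P_n)=2\csc\frac{\pi}{2(n+1)}-2 \ \ (n \text{ even}).
$$

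Since $\csc>\cot$ on $(0,\tfrac{\pi}{2})$, in either parity the desired inequality $IE(P_n)<\mathcal{E}(P_n)$ follows once one shows, for every integer $n\ge 2$, the single inequality $h(n):=2\cot\frac{\pi}{2(n+1)}-\cot\frac{\pi}{4n}>1$. I would establish this by monotonicity. Viewing $h$ as a function of a real variable $n\ge 2$, a short differentiation shows that $h'(n)>0$ is equivalent to $2n\sin\frac{\pi}{4n}>(n+1)\sin\frac{\pi}{2(n+1)}$; after writing each side as $\frac{\pi}{2}$ times a value of $\theta\mapsto\frac{\sin\theta}{\theta}$, this reduces to the decreasing-ness of $\frac{\sin\theta}{\theta}$ on $(0,\pi)$ combined with the trivial inequality $\frac{\pi}{4n}<\frac{\pi}{2(n+1)}$ (valid for $n>1$). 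Hence $h$ is increasing on $[2,\infty)$, so $h(n)\ge h(2)=2\sqrt3-(1+\sqrt2)$, and $2\sqrt3-(1+\sqrt2)>1$ because, after isolating the radicals and squaring twice, it is equivalent to $9>8$. This completes the argument.

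The bookkeeping in the first two steps — two uses of Lemma~\ref{lemma} and the parity case analysis for the absolute values — is routine. The delicate point, and the main obstacle, is the final inequality $h(n)>1$: it is genuinely tight for small $n$, the slack at $n=2$ being only $2\sqrt3-2-\sqrt2\approx 0.05$ (it then grows monotonically toward $\frac{4}{\pi}-1\approx 0.27$), so no crude bound on the two cotangents will close the gap. This is why I would invest in the $\frac{\sin\theta}{\theta}$-monotonicity argument, pin down the exact value $h(2)=2\sqrt3-1-\sqrt2$, and let monotonicity do the rest, rather than attempt term-by-term estimates of $IE(P_n)$ and $\mathcal{E}(P_n)$, whose differing numbers of summands make a direct comparison awkward.
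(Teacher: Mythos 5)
Your proposal is correct, and on the decisive step it takes a genuinely different route from the paper. The closed forms agree: both arguments arrive at $IE(P_n)=\cot\frac{\pi}{4n}-1$ (the paper via the signless Laplacian spectrum $2+2\cos\frac{j\pi}{n}$, you via bipartiteness and the Laplacian spectrum — same numbers) and at $\mathcal{E}(P_n)=2\cot\frac{\pi}{2(n+1)}-2$ or $2\csc\frac{\pi}{2(n+1)}-2$ according to parity. The differences are in how the two inequalities are closed. For $\mathcal{E}(P_n)\le\pi(P_n)$ you simply invoke the general bound $\mathcal{E}(G)\le\pi(G)$ of Das, Mojallal and Gutman, which is already quoted in the Introduction; the paper instead re-derives it for $P_n$ by estimating $\cot$ and $\csc$ (for even $n$ via a Laurent-series/Bernoulli-number bound), which is considerably more work and, as written, contains slips (e.g.\ the stray conclusion ``$\mathcal{E}(P_n)>\pi(P_n)$ if $n$ is odd''). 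Your shortcut is legitimate and cleaner, at the cost of making the theorem depend on the cited result rather than being self-contained. For $IE(P_n)<\mathcal{E}(P_n)$ the paper splits by parity and, in the odd case, uses a double-angle manipulation valid only for $n\ge 5$ plus numerical checks for small $n$, while the even case is dispatched rather sketchily from $\csc x-\cot x>0$; your reduction of both parities to the single inequality $h(n)=2\cot\frac{\pi}{2(n+1)}-\cot\frac{\pi}{4n}>1$, proved by showing $h'>0$ via the monotonicity of $\theta\mapsto\frac{\sin\theta}{\theta}$ and the exact base value $h(2)=2\sqrt3-1-\sqrt2>1$, is uniform, covers every $n\ge 2$ without case-by-case numerics, and correctly identifies that the inequality is tight near $n=2$ so that crude bounds cannot work. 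All the supporting computations in your outline (the two applications of Lemma~\ref{lemma}, the equivalence $h'(n)>0\iff 2n\sin\frac{\pi}{4n}>(n+1)\sin\frac{\pi}{2(n+1)}$, and the verification $2\sqrt3>2+\sqrt2$) check out.
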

\begin{proof} 
	
Let $P_n$ be a path with $n$ vertices. Then, its degree sequence is $(d)=(1,2,\ldots,2,1)$ and hence $\pi(P_n)=2+(n-2)\sqrt{2}$. Moreover, the eigenvalues of $Q(P_n)$ are $q_j=2+2\cos(\frac{\pi j}{n})$ $(j=1,2,\ldots,n)$ (see \cite{BrouwerHaemers2011}). Thus, 
\begin{eqnarray*}
IE(P_n)&=&\sum_{j=1}^{n}  \sqrt{2+2\cos\left( \frac{\pi j}{n}\right) } \\
	&=& 2\sum_{j=1}^{n} \cos\left( \frac{ \pi j}{2n}\right) .
\end{eqnarray*}
Then, by Lemma~\ref{lemma} and the sum formula for the sine function, we have
\begin{eqnarray*}
IE(P_n) &=& 2\left(-1+\frac{\sin\left( \frac{(n+1)\pi}{4n}\right)  \cos\left( \frac{\pi}{4}\right) }{\sin\left( \frac{\pi}{4n}\right) }\right) \\
	&=& 2\left(-1+\frac{1}{2} \frac{\sin\left( \frac{\pi}{4n}\right) + \cos\left( \frac{\pi}{4n}\right) }{\sin\left( \frac{\pi}{4n}\right) }\right) \\
	&=& -1+\cot \left( \frac{\pi}{4n}\right)
\end{eqnarray*}
On the other hand, the eigenvalues of the adjacency matrix of $P_n$ are $\lambda_j=2 \cos\left(  \frac{ \pi j}{n+1}\right)  $  $(j=1,2,\ldots,n)$ (see \cite{BrouwerHaemers2011}). Now, we can calculate the energy of $P_n$ in two cases. For the former, suppose $n$ is even.
\begin{eqnarray*}
		\mathcal{E}(P_n)&=& 2\sum_{j=1}^{n}  \left| \cos\left( \frac{ \pi j}{n+1}\right)\right| \\ 
		&=& 4\sum_{j=1}^{\frac{n}{2}}  \cos\left( \frac{ \pi j}{n+1}\right).
\end{eqnarray*}
By Lemma~\ref{lemma} and a product-to-sum formula, we have
\begin{eqnarray*}
\mathcal{E}(P_n) &=& 4\cdot \frac{\sin \left( \frac{(n+2) \pi}{4 (n+1)}\right) \cos \left( \frac{n \pi}{4 (n+1)}\right) }{\sin \left( \frac{\pi}{2 (n+1)}\right) }-4 \\
		&=& 2\cdot \frac{\sin\frac{\pi}{2}+\sin \left( \frac{\pi}{2 (n+1)}\right)}{\sin \left( \frac{\pi}{2 (n+1)}\right)}-4 \\
		&=& -2+2\csc \left( \frac{\pi}{2(n+1)}\right). 
\end{eqnarray*}
For the latter, suppose $n$ is odd. Similarly, by Lemma~\ref{lemma}, we have
\begin{eqnarray*}
		\mathcal{E}(P_n) &=& 4 \sum_{j=1}^{\frac{n-1}{2}} \left| \cos\left( \frac{ \pi j}{n+1}\right)\right| \\ 
		&=& 4 \cdot \frac{\sin\left( \frac{\pi}{4}\right) \cos\left( \frac{(n-1) \pi}{4 (n+1)}\right) }{\sin\left( \frac{\pi}{2 (n+1)}\right)}-4.
\end{eqnarray*}
By the difference formula for the cosine function,
\begin{eqnarray*}
		\mathcal{E}(P_n) &=& 2 \cdot \frac{\cos\left( \frac{\pi}{2 (n+1)}\right)+\sin\left( \frac{\pi}{2 (n+1)}\right) }{\sin\left( \frac{\pi}{2 (n+1)}\right)}-4 \\
		&=& -2+2\cot\left( \frac{\pi}{2 (n+1)}\right).
\end{eqnarray*}
Now, we show that $\mathcal{E}(P_n) > IE (P_n)$. Suppose $n$ is odd. Since the cotangent function is decreasing on the interval $ (0,\pi)$, we have
	\begin{eqnarray*}
		\cot\left( \frac{\pi}{2 n}\right)<\cot\left( \frac{\pi}{2 (n+1)}\right)
	\end{eqnarray*}
for all $n\geq1$. Then,	
\begin{eqnarray*}
		2\cot\left( \frac{\pi}{2 (n+1)}\right)-\cot\left( \frac{\pi}{4 n}\right)&>&	2\cot\left( \frac{\pi}{2 n}\right)-\cot\left( \frac{\pi}{4 n}\right)\\
		&=&	2\cot\left( \frac{\pi}{2 n}\right)-\frac{\cot^2\left( \frac{\pi}{2 n}\right)-1}{2\cot\left( \frac{\pi}{2 n}\right)}\\
		&=&	\frac{4\cot^2\left( \frac{\pi}{2 n}\right)-\cot^2\left( \frac{\pi}{2 n}\right)-1}{2\cot\left( \frac{\pi}{2 n}\right)}\\
		&=&	\frac{3\cot^2\left( \frac{\pi}{2 n}\right)-1}{2\cot\left( \frac{\pi}{2 n}\right)}.
	\end{eqnarray*}
On the other hand, one can show that
	\begin{eqnarray*}
		\frac{3\cot^2\left( \frac{\pi}{2 n}\right)-1}{2\cot\left( \frac{\pi}{2 n}\right)}>1 &\Leftrightarrow& \cot\left( \frac{\pi}{2 n}\right) >3 \\
		&\Leftrightarrow&	n\geq5,
	\end{eqnarray*}
Thus, we obtain $\mathcal{E}(P_n) \geq IE$ for all odd $n\geq5$. Moreover, one can see that $\mathcal{E}(P_n)\geq IE(P_n)$ for $n=1$ and $n=3$, numerically. Thus, $\mathcal{E}(P_n) \geq IE(P_n)$ for all odd $n\geq1$. 
Now, suppose $n$ is even. Since $ \csc x - \cot x > 0 $ for $0<x<\frac{\pi}{2}$, we have $2\csc( \frac{\pi}{2 (n+1)})-\cot\left( \frac{\pi}{4 n}\right) >1$ and hence $\mathcal{E}(P_n)\geq IE(P_n)$ for all even $n\geq5$.
Thus, $\mathcal{E}(P_n)\geq IE(P_n)$ for all $n\geq1$.

Now, we show that $\mathcal{E}(P_n)\leq\pi(P_n)$. Suppose $n$ is odd. Since $\cot x<\frac{1}{x}$ for $0<x<\pi$,
\begin{eqnarray*}
		\mathcal{E} (P_n) = -2+2\cot\left( \frac{\pi}{2 (n+1)}\right)<-2+ \frac{4 (n+1)}{\pi}=\frac{4}{\pi}n+\frac{4}{\pi}-2.
\end{eqnarray*}
On the other hand, one can show that
\begin{eqnarray*}
		\frac{4}{\pi}n+\frac{4}{\pi}-2<\sqrt{2}n+2-2\sqrt{2} \Leftrightarrow n>\frac{4 \pi-4-2\sqrt{2}\pi}{4-\sqrt{2}\pi}\cong0,72.
	\end{eqnarray*}
Thus, $\mathcal{E}(P_n) >\pi(P_n)$ if $n$ is odd. Now, suppose $n$ is even. The Laurent series of the cosecant functions is
\begin{eqnarray*}
		\csc x &=&\sum_{k=0}^{\infty} \frac{(-1)^{k+1}\cdot  2 \cdot (2^{2k-1} -1)B_{2k}}{(2k)!} x^{2k-1}\\
		&=&\frac{1}{x}+\frac{1}{6}x+\frac{7}{360}x^3+\frac{31}{15120}x^5+\cdots,
	\end{eqnarray*}
where $B_{2k}$ is the $2k-$th Bernoulli number. It is well-known that 
$$B_{2k} = \frac{(-1)^{k-1}\cdot 2 \cdot (2k)!} {(2\pi)^{2k}} \zeta(2k),$$
where $\zeta$ is the Riemann zeta function, and $\zeta(2k) \leq \zeta(2)=\pi^2/6$ for every integer $k\geq 1$. Thus,
\begin{eqnarray*}
	-2+2\csc\left( \frac{\pi}{2(n+1)}\right) &<& -2+ \frac{4(n+1)}{\pi}+ \frac{2\pi}{3} \sum_{k=1}^{\infty} \left( \frac{1}{2(n+1)} \right)^{2k-1} \\
	&=& -2+ \frac{4(n+1)}{\pi}+ \frac{4\pi (n+1)}{3(2n+1)(2n+3)} \\ 
	&<& \frac{4(n+1)}{\pi} -1.4  
	\end{eqnarray*}
In what follows one can show that
\begin{eqnarray*}
		\frac{4(n+1)}{\pi} - 1.4 <\sqrt{2}n+2-2\sqrt{2}
\end{eqnarray*}
for all $n\geq5$. Moreover, by a simple calculation, one can easily see that
	
	\begin{eqnarray*}
		-2+2\csc\left( \frac{\pi}{2(n+1)}\right)<\sqrt{2}n+2-2\sqrt{2}
	\end{eqnarray*}
for each $n=2,3,4$. Thus $\mathcal{E}(P_n)<\pi(P_n)$ for all $n\geq2$. The proof is complete.
	
\end{proof}

\begin{thm}
	Let $C_n$ be a cycle graph with $n\geq 3$ vertices.
	\begin{enumerate}
	\item[(i)] If $n=4k$ then $C_n\in \mathcal{G}_n^2$, that is $\pi^*(C_n) < \mathcal{E}(C_n) < LEL(C_n) $,
	\item[(ii)] If $n=2k+1$ then $C_n\in \mathcal{G}_n^3$, that is $LEL(C_n) < \mathcal{E}(C_n) = IE(C_n) $,
	\item[(iii)] If $n=4k+2$ then $C_n\in \mathcal{G}_n^4$, that is $IE(C_n) < \mathcal{E}(C_n) \leq \pi(C_n) $.
	\end{enumerate} 
\end{thm}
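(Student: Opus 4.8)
The plan is to reduce the theorem to a short list of elementary trigonometric inequalities, after first writing down closed forms for every quantity involved. Since $C_n$ is $2$-regular, its degree sequence is $(2,\dots,2)$ and its conjugate degree sequence is $(n,n,0,\dots,0)$, so at once $\pi(C_n)=n\sqrt2$ and $\pi^*(C_n)=2\sqrt n$. Being a circulant graph, $C_n$ has adjacency eigenvalues $2\cos(2\pi j/n)$, and since $L(C_n)=2I-A(C_n)$ and $Q(C_n)=2I+A(C_n)$, its Laplacian and signless-Laplacian eigenvalues are $4\sin^2(\pi j/n)$ and $4\cos^2(\pi j/n)$ respectively, for $j=0,1,\dots,n-1$. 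Hence
\[
LEL(C_n)=2\sum_{j=0}^{n-1}\sin\frac{\pi j}{n},\qquad IE(C_n)=2\sum_{j=0}^{n-1}\Bigl|\cos\frac{\pi j}{n}\Bigr|,\qquad \mathcal{E}(C_n)=2\sum_{j=0}^{n-1}\Bigl|\cos\frac{2\pi j}{n}\Bigr|.
\]
The first sum has no absolute values and, by Lemma~\ref{lemma} applied with $\theta=0$, $\alpha=\pi/n$ (and $n$ replaced by $n-1$), collapses to $LEL(C_n)=2\cot\bigl(\pi/(2n)\bigr)$.

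Next I would evaluate $IE(C_n)$ by splitting the sum at the index where $\cos(\pi j/n)$ changes sign, pairing the term $j$ with the term $n-j$ (which have equal absolute value since $\cos(\pi(n-j)/n)=-\cos(\pi j/n)$), and applying Lemma~\ref{lemma} together with a product-to-sum identity; a short case analysis on the parity of $n$ yields $IE(C_n)=2\csc\bigl(\pi/(2n)\bigr)$ for $n$ odd and $IE(C_n)=2\cot\bigl(\pi/(2n)\bigr)$ for $n$ even. For $\mathcal{E}(C_n)$: when $n$ is odd, the map $j\mapsto 2j\bmod n$ is a bijection of $\{0,\dots,n-1\}$ and $|\cos|$ has period $\pi$, so $\sum_j|\cos(2\pi j/n)|=\sum_j|\cos(\pi j/n)|$ and therefore $\mathcal{E}(C_n)=IE(C_n)=2\csc\bigl(\pi/(2n)\bigr)$ (this recovers $\mathcal{E}(C_n)=IE(C_n)$ for odd $n$); when $n$ is even, reducing the angles modulo $\pi$ and repeating the split-and-pair computation gives $\mathcal{E}(C_n)=4\cot(\pi/n)$ if $4\mid n$ and $\mathcal{E}(C_n)=4\csc(\pi/n)$ if $n\equiv2\pmod 4$.

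With these closed forms in hand, each part becomes routine, and I would finish by invoking that $\pi^*(G)\le LEL(G)\le IE(G)\le\pi(G)$ and $\mathcal{E}(G)\le\pi(G)$ always hold, so that $\mathcal{G}_n^1,\dots,\mathcal{G}_n^4$ partition $\mathcal{G}_n$ and exhibiting the correct chain of (in)equalities pins down the subclass. For (ii) one only needs $\csc x>\cot x$ on $(0,\pi/2)$ to obtain $LEL(C_n)<\mathcal{E}(C_n)=IE(C_n)$, whence $C_n\in\mathcal{G}_n^3$. For (iii), writing $x=\pi/(2n)$, the inequality $IE(C_n)<\mathcal{E}(C_n)$ reduces via the double-angle formula to $\cot x<\sec x\,\csc x$, i.e. $\cos^2x<1$; and $\mathcal{E}(C_n)\le\pi(C_n)$ reduces to $\sin(\pi/n)\ge 2\sqrt2/n$, which follows because $t\mapsto(\sin t)/t$ is decreasing on $(0,\pi)$ and $(\sin(\pi/6))/(\pi/6)=3/\pi>2\sqrt2/\pi$, so $C_n\in\mathcal{G}_n^4$. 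For (i), again with $x=\pi/(2n)$, the identity $2\cot(2x)=\cot x-\tan x$ immediately gives $\mathcal{E}(C_n)<LEL(C_n)$, while $\pi^*(C_n)<\mathcal{E}(C_n)$ amounts to $\sqrt n\,\tan(\pi/n)<2$; convexity of $\tan$ on $(0,\pi/2)$ gives $\tan(\pi/n)<4/n$ for $n>4$, hence $\sqrt n\,\tan(\pi/n)<4/\sqrt n\le\sqrt2<2$ for $n\ge8$, so $C_n\in\mathcal{G}_n^2$ — noting that $n=4$ is genuinely exceptional, since there $\mathcal{E}(C_4)=\pi^*(C_4)$, so (i) should be read with $k\ge2$.

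The step I expect to cause the most friction is the bookkeeping in evaluating $\mathcal{E}(C_n)$ and $IE(C_n)$: the absolute values force each sum to be split according to the sign of the cosine and its symmetric terms to be paired, with the exact endpoints depending on $n\bmod 4$, and then Lemma~\ref{lemma} must be applied and the result simplified with sum-to-product identities. The bijection $j\mapsto 2j\bmod n$ collapsing $\mathcal{E}(C_n)$ onto $IE(C_n)$ for odd $n$ is the one genuinely clever move; after that every remaining inequality is a one-line trigonometric manipulation except the two quantitative estimates $\sin(\pi/n)\ge 2\sqrt2/n$ and $\tan(\pi/n)<4/n$, each of which needs a monotonicity/convexity argument of exactly the kind already used in the proof of the preceding theorem on paths.
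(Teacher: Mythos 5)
Your proposal is correct and follows essentially the same route as the paper: closed trigonometric forms for $\mathcal{E}(C_n)$, $LEL(C_n)$ and $IE(C_n)$ obtained from Lemma~\ref{lemma}, followed by elementary trigonometric comparisons, and all of your closed forms agree with the paper's. The differences are local but worth noting. Your bijection $j\mapsto 2j\bmod n$ yields $\mathcal{E}(C_n)=IE(C_n)$ for odd $n$ without evaluating the second sum (the paper computes both sums separately and observes the formulas coincide); you bound $\tan(\pi/n)$ by convexity where the paper uses $\cot x>\frac{1}{x}+\frac{1}{x-\pi}$; and you verify $\mathcal{E}(C_n)\leq\pi(C_n)$ directly in (iii) via $\sin(\pi/n)\geq 3/n>2\sqrt{2}/n$, which the paper leaves to the general inequality $\mathcal{E}(G)\leq\pi(G)$. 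Most importantly, your flag that $n=4$ is genuinely exceptional is a correct catch of an error in the paper's proof of (i): one has $\mathcal{E}(C_4)=4\cot(\pi/4)=4=2\sqrt{4}=\pi^*(C_4)$, so $C_4\in\mathcal{G}_4^1$ rather than $\mathcal{G}_4^2$ (consistent with Table~2.1, which reports $|\mathcal{G}_4^2|=0$), and the paper's intermediate claim $\frac{4n(n-2)}{\pi(n-1)}>2\sqrt{n}$ fails at $n=4$, where the left side equals $\frac{32}{3\pi}\approx 3.40<4$. Reading part (i) with $k\geq 2$, as you propose, is the correct repair, and your argument for $n\geq 8$ is sound.
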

\begin{proof}
\textit{\textbf{(i)} }Let $C_n$ be a cycle with $n=4k\geq4$ vertices. Its degree sequence is $(d)=(2,2,\ldots , 2)$ and its conjugate degree sequence is $ (d^*)=(n,n,0,\ldots ,0)$. Thus $\pi^* (C_n)= 2 \sqrt{n}$. On the other hand, the eigenvalues of its adjacency matrix are $2\cos\left( \frac{2 \pi j}{n}\right) $ $(j=0,1,2,\ldots,n-1)$ (see \cite{BrouwerHaemers2011}) and hence its energy is  
\begin{eqnarray*}
		\mathcal{E}(C_n) 
		&=& 2\sum_{j=1}^{4k-1} \left| \cos\left( \frac{ \pi j}{2k}\right) \right|+2 \\ 
		&=& 8\sum_{j=1}^{k-1} \cos\left( \frac{ \pi j}{2k}\right)+4.
\end{eqnarray*} 
By Lemma~\ref{lemma} and the sum formula for the sine function, we have
\begin{eqnarray*}
		\mathcal{E}(C_n)&=& 4\sqrt{2}\cdot \frac{\sin\left( \frac{(k+1) \pi}{4k}\right) }{\sin\left( \frac{\pi}{4k}\right) }-4\\
		&=& 4\cdot \frac{\cos\left( \frac{\pi}{4k}\right) +\sin\left( \frac{\pi}{4k}\right)}{\sin\left( \frac{\pi}{4k}\right) }-4\\
		&=& 4\cot\left( \frac{\pi}{n}\right). 
\end{eqnarray*}
By the fact that $\cot x>\frac{1}{x}+\frac{1}{x-\pi}$ for $ 0<x<\frac{\pi}{2}$, we have
	
	\begin{eqnarray*}
		4\cot\left( \frac{\pi}{n}\right)>\frac{4n (n-2)}{\pi (n-1)}. 
	\end{eqnarray*}
Moreover, one can easily show that 
\begin{eqnarray*}
		\frac{4n(n-2)}{\pi (n-1)}>2\sqrt{n}.
\end{eqnarray*}
Thus, we obtain $\mathcal{E}(C_n)>\pi^*(C_n)$.
	
On the other hand, the Laplacian eigenvalues of $C_n$ are $2-2\cos\left(\frac{2 \pi j}{n} \right) $ $ (j=0,1,\ldots, n-1)$ (see \cite{BrouwerHaemers2011}) and hence its Laplacian energy-like-invariant is
\begin{eqnarray*}
		LEL(C_n)&=& 2\sum_{j=0}^{n-1} \sin\left( \frac{ \pi j}{n}\right)  \\ 
		&=& 2\cot\left( \frac{\pi}{2n}\right).  
\end{eqnarray*}
Here the last step follows from Lemma~\ref{lemma}. Now, let $\theta=\frac{\pi}{2n}$. It is clear that $\cot \theta$ and $\cot 2\theta$ are positive for all $n\geq4$. Then, we have $\frac{\cot^2\theta-1}{\cot \theta}<\cot \theta$ and hence $4\cot 2\theta<2\cot\theta$. Thus, the proof of (i) is complete.

\textit{\textbf{(ii)} } Let $n=2k+1$ for a positive integer $k$. In this case,  
\begin{eqnarray*}
		\mathcal{E}(C_n)&=& 2\sum_{j=0}^{n-1} \left| \cos\left( \frac{2 \pi j}{n}\right) \right|\\
		&=& 2+2\sum_{j=1}^{n-1} \left| \cos\left( \frac{2 \pi j}{n}\right) \right|\\
		&=& -2+4\sum_{j=0}^{\frac{n-1}{2}} \cos\left( \frac{\pi j}{n}\right)\\
		&=& -2+4\cdot \frac{\sin\left( \frac{(n+1) \pi}{4n}\right)\cos\left( \frac{(n-1) \pi}{4n}\right)}{\sin\left( \frac{\pi}{2n}\right)}\\
		&=&2\csc\left( \frac{\pi}{2n}\right)
\end{eqnarray*} 
and then it is clear that $\mathcal{E}(C_n)> LEL(C_n)$. On the other hand, the eigenvalues of the signless Laplacian matrix $Q(C_n)$ are $q_j=2+2\cos\left( \frac{2 \pi j}{n}\right)$ $ (j=0,1,2,\cdots,n-1)$ (see \cite{CvetkovicSimic2009}) and hence the incidence energy of $C_n$ is 
\begin{eqnarray*}
		IE(C_n)&=& -2+4\sum_{j=0}^{\frac{n-1}{2}} \cos\left( \frac{ \pi j}{n}\right)\\
		&=& -2+4\cdot \frac{\sin\left( \frac{(n+1) \pi}{4n}\right)\cos\left( \frac{(n-1) \pi}{4n}\right)}{\sin\left( \frac{\pi}{2n}\right)}\\
		&=&2\csc\left( \frac{\pi}{2n}\right).
	\end{eqnarray*}
In this case, $\mathcal{E}(C_n)=IE(C_n)$. Thus the proof of (ii) is complete.
	
\textit{\textbf{(iii)} } Let $n=4k+2$ for a positive integer $k$. The energy of $C_n$ is  
\begin{eqnarray*}
		\mathcal{E}(C_n)&=& 4\sum_{j=0}^{\frac{n-2}{2}}\left|  \cos\left( \frac{2 \pi j}{n}\right)\right| \\
		&=& 4+8\sum_{j=0}^{\frac{n-1}{8}} \cos\left( \frac{2 \pi j}{n}\right)\\
		&=&-4+\frac{\sin\left( \frac{(n+2) \pi}{4n}\right)\cos\left( \frac{(n-2) \pi}{4n}\right)}{\sin\left( \frac{\pi}{n}\right)}\\
		&=& -4+4\cdot \frac{1+\sin\left( \frac{\pi}{n}\right)}{\sin\left( \frac{\pi}{n}\right)}\\
		&=&4\csc\left( \frac{\pi}{n}\right).
\end{eqnarray*} 
and the incidence energy of $C_n$ is
\begin{eqnarray*}
		IE(C_n)	&=& -2+4\sum_{j=0}^{\frac{n-2}{2}} \left| \cos\left( \frac{\pi j}{n}\right)\right| \\
		&=&-2+2\sqrt{2}\cdot \frac{\cos\left( \frac{(n-2) \pi}{4n}\right)}{\sin\left( \frac{\pi}{2n}\right)}\\
		&=& 2\cot\left( \frac{\pi}{2n}\right).
	\end{eqnarray*}
Since  $\frac{\mathcal{E}(C_n)}{IE(C_n)}=\frac{1}{\cos^2 \left( \frac{\pi}{2n}\right) }>1$, we have $\mathcal{E}(C_n)> IE(C_n)$. The proof of (iii) is complete.
\end{proof}

\begin{thm} 
	Let $K_n$ be a complete graph with $n>3$ vertices. Then $K_n\in \mathcal{G}_n^2$, that is $\mathcal{E}(K_n) \leq \pi^* (K_n)$. 
\end{thm}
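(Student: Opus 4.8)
The plan is to write both $\mathcal{E}(K_n)$ and $\pi^*(K_n)$ in closed form and then compare them directly. First I would record the adjacency spectrum of $K_n$: since $A(K_n) = J_n - I_n$, its eigenvalues are $n-1$ with multiplicity one and $-1$ with multiplicity $n-1$ (see \cite{BrouwerHaemers2011}), so $\mathcal{E}(K_n) = (n-1) + (n-1)\cdot 1 = 2(n-1)$. Next I would compute the conjugate degree sequence: $K_n$ is $(n-1)$-regular, so $(d) = (n-1, n-1, \ldots, n-1)$, and from $d_i^* = |\{j : d_j \geq i\}|$ we get $d_i^* = n$ for $1 \leq i \leq n-1$ and $d_n^* = 0$; that is, $(d^*) = (n, \ldots, n, 0)$ with exactly $n-1$ entries equal to $n$. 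Hence $\pi^*(K_n) = (n-1)\sqrt{n}$. (As a consistency check, $\sum_i d_i^* = n(n-1) = 2|E(K_n)| = \sum_i d_i$.)

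With these two formulas in hand, the inequality $\mathcal{E}(K_n) \leq \pi^*(K_n)$ claimed in the theorem is just $2(n-1) \leq (n-1)\sqrt{n}$; dividing by $n-1 > 0$ this is equivalent to $2 \leq \sqrt{n}$, i.e. to $n \geq 4$. Since $n > 3$, the inequality holds, so $K_n$ satisfies condition (1) of Problem~\ref{problem} and therefore lies in $\mathcal{G}_n^1$; moreover equality occurs precisely for $n = 4$, which matches $K_4$ being one of the four graphs counted in $\mathcal{G}_4^1$ in Table~2.1.

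There is essentially no obstacle here: once the spectrum of $K_n$ and the conjugate degree sequence of a regular graph are written down, the proof reduces to a one-line inequality. The only spots worth a careful word are reading off $(d^*)$ correctly from the definition (its last entry is $0$, not $n$) and noting that the weak inequality in the definition of $\mathcal{G}_n^1$ lets the equality case $n=4$ still belong to $\mathcal{G}_n^1$. Finally I would point out why the hypothesis $n > 3$ is genuinely needed: for $n = 3$ one has $K_3 = C_3$, and the previous theorem (the case $n = 2k+1$ with $k=1$) places $C_3$ in $\mathcal{G}_3^3$; indeed $\mathcal{E}(K_3) = 4 > 2\sqrt{3} = \pi^*(K_3)$, so $K_3 \notin \mathcal{G}_3^1$.
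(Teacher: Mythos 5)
Your proof is correct and follows essentially the same route as the paper: read off the adjacency spectrum to get $\mathcal{E}(K_n)=2(n-1)$, compute $\pi^*(K_n)=(n-1)\sqrt{n}$ from the conjugate degree sequence, and reduce the claim to $2\leq\sqrt{n}$. You are also right to place $K_n$ in $\mathcal{G}_n^1$ rather than $\mathcal{G}_n^2$ --- the inequality $\mathcal{E}(K_n)\leq\pi^*(K_n)$ is condition (1) of Problem~\ref{problem}, consistent with the introduction's claim that the complete graph lies in $\mathcal{G}_n^1$, so the label $\mathcal{G}_n^2$ in the theorem statement is a typo.
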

\begin{proof}
The eigenvalues of a complete graph of order $n>3$ are 
$$n-1, (-1)^{n-1}, (-1)^{n-1}, \ldots, (-1)^{n-1}, $$
(see \cite{BrouwerHaemers2011}) and hence $\mathcal{E}(K_n) =2n-2$. On the other hand, $d_1=d_2=\cdots=d_n=n-1$ and hence $d_1^*=d_2^*=\cdots=d_{n-1}^*=n$ and $d_n^*=0$. Thus, $\pi^*(K_n)=(n-1)\sqrt{n}$. Since $n>3$, we have $2(n-1)\leq\sqrt{n}(n-1)$, and hence $\mathcal{E}(K_n) \leq\pi^*(K_n).$ 
\end{proof}

%% The Appendices part is started with the command \appendix;
%% appendix sections are then done as normal sections
%% \appendix

\section*{Acknowledgement}
We would like to thank Professor Oktay \"{O}lmez for his valuable help in improving our Sage code. 

% \section*{References}
%% If you have bibdatabase file and want bibtex to generate the
%% bibitems, please use
%%
%%  \bibliographystyle{elsarticle-num} 
%%  \bibliography{<your bibdatabase>}

%% else use the following coding to input the bibitems directly in the
%% TeX file.
% \newpage

\end{document}